\documentclass{amsart}
\usepackage{amssymb,amsmath}
\textwidth=148mm
\hoffset=-10mm

\newcommand{\RR}{{\mathbb R}}

\newcommand{\e}{\varpi}

\newcommand{\de}{\delta}
\newcommand{\del}{\partial}
\newcommand{\om}{\omega}

\newcommand{{\loc}}{{\ell\mathrm oc}}

\def\meanint{{\diagup\hskip -.42cm\int}}

\newtheorem{theorem}{Theorem}

\newtheorem{proposition}{Proposition}
\newtheorem{corollary}{Corollary}

\begin{document}

\title{ Gradient estimate for  solutions of second-order elliptic equations}
\author{Vladimir Maz'ya}
\address{Department of Mathematics, Link\"oping University, SE-581 83 Link\"oping, Sweden and RUDN, 6 Miklukho-Maklay St, Moscow, 117198, Russia}
\email{vladimir.mazya@liu.se}

\author{ Robert McOwen}
\address{Department of Mathematics, Northeastern University, Boston, MA 02115}
\email{r.mcowen@northeastern.edu}
\date{November 22, 2021}

\keywords{Gradient estimate, weak solution, square-Dini condition}
\subjclass{35B45, 35B40,  35J15}

\begin{abstract}
We obtain a local estimate for the gradient of solutions to a second-order elliptic equation in divergence form with bounded measurable coefficients that are square-Dini continuous at the single point $x=0$. In particular, we treat the case of solutions that are not Lipschitz continuous at $x=0$. We show that our  estimate is sharp.

\end{abstract}
\maketitle

\section{Introduction}\label{sec:Intro}

We consider a uniformly elliptic equation in divergence form:
\begin{equation}\label{Lu=0}
\partial_j (a_{ij}(x)\,\partial_i u)=0 \quad\hbox{for $x\in B_1$},
 \end{equation}
 where we have used the summation convention and the coefficients $a_{ij}=a_{ji}$ are real-valued bounded measurable functions in the ball $B_{1}:=\{x\in \RR^n:|x|<1\}$, $n\geq 2$.  
  A weak solution $u\in H^{1,2}(B_1)$, i.e.\ $u,\nabla u\in L^2(B_1)$, is known to be H\"older continuous by De Giorgi \cite{DG} and Nash \cite{N}, but may have unbounded gradient. Under additional regularity of the $a_{ij}$ such as Dini continuity or Dini mean oscillation in a domain, weak solutions are known to be $C^1$:  cf.\ \cite{HW} and \cite{DK}. Gradient bounds in a domain have also been obtained for nonlinear problems,
including  minimal regularity on the data: cf.\ \cite{Mi}, \cite{KuM}, \cite{CM2}, \cite{CM3}, and \cite{DM}.  
   In \cite{MM} we obtained conditions on the coefficients $a_{ij}$ just at $x=0$  which imply that all weak solutions of \eqref{Lu=0} are Lipschitz continuous at $x=0$. In this paper we allow the possibility of weak solutions that are not Lipschitz continuous at $x=0$, but obtain bounds on the growth of their gradient in mean value. We are not aware of any previous results in the literature of this type.

Let us describe our assumptions on the coefficients $a_{ij}(x)$. These are most conveniently stated when $a_{ij}(0)=\delta_{ij}$, which can be achieved by a change of coordinates. We want the $a_{ij}(x)$ to be {\it square-Dini continuous at $x=0$}, which means
\begin{equation}\label{a_ij-delta_ij}
\sup_{|x|=r}|a_{ij}(x)-\delta_{ij}|\leq \om(r) \quad\hbox{as}\ r=|x|\to 0,
\end{equation}
where the modulus of continuity $\om(r)$ is a nondecreasing, positive, continuous function for $r>0$ satisfying
\begin{equation}\label{square-Dini}
\int_0^{1} \frac{\om^2(r)}{r}\,dr<\infty.
\end{equation}
Since $\om(r)$ does not vanish rapidly as $r\to 0$, it is natural and often convenient to assume
\begin{equation}\label{om-notrapiddecrease}
\hbox{for some $\kappa>0$}, \ \om(r)\,r^{-1+\kappa} \ \hbox{is nonincreasing for $0<r<1$.}
\end{equation}

As seen in  \cite{MM}, the conditions \eqref{a_ij-delta_ij}-\eqref{square-Dini} allow
the regularity of solutions of \eqref{Lu=0} at $x=0$ to be determined by the asymptotic behavior of solutions of the $n$-dimensional dynamical system
\begin{equation}\label{R-DynSys}
\frac{d\phi}{dt}+R(e^{-t})\phi=0 \quad\hbox{for}\ 0<t<\infty,
\end{equation}
where $R(r)$ is the  matrix function
 \begin{equation}\label{def:R}
 R(r):=\meanint_{S^{n-1}}\left(A(r\theta)-nA(r\theta)\theta\otimes\theta\right)\,d\theta \quad\hbox{for $0<r<1$.}
\end{equation}
 Here $A=(a_{ij})$, $A\theta\otimes\theta$ is the outer product of the vectors $A\theta$ and $\theta$, and $d\theta$ denotes standard surface measure on $S^{n-1}$; the slashed integral in \eqref{def:R} and throughout this paper denotes mean value.
We note that\footnote{Throughout this paper we use $c$ to denote a constant that does not depend upon the solution $u$ but whose value may change with each occurence.} $|R(r)|\leq c\,\om(r)$, but  the matrix $R$ need not be symmetric.
In  \cite{MM} we found that, if the dynamical system \eqref{R-DynSys} is uniformly stable as $t\to\infty$, then a weak solution of \eqref{Lu=0} must be Lipschitz continuous at $x=0$. In this paper, we do not assume \eqref{R-DynSys} is uniformly stable, but seek an estimate on the growth of the gradient of a weak solution.

To formulate our main result, for a square matrix $M$ let $\mu[M]$ denote the largest eigenvalue of the symmetric matrix $(M+M^*)/2$, where $M^*$ denotes the adjoint (i.e.\ transpose) of $M$. 
\begin{theorem}\label{Main Theorem} 
Under the above conditions on $a_{ij}$, if $u\in H^{1,2}(B_1)$ is a weak solution of \eqref{Lu=0}, then
\begin{equation}\label{grad(u)-est}
\left(\meanint_{r<|x|<2r} |\nabla u(x)|^2\,dx\right)^{1/2} \leq c\,\|u\|_{L^2(B_1)}\,\exp\left( \int_r^1 \mu[-R(\rho)]\,\frac{d\rho}{\rho}\right) \quad\hbox{for}\ 0<r<1/2.
\end{equation}
\end{theorem}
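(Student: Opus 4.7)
Since the exponential factor in \eqref{grad(u)-est} is precisely what Grönwall's inequality produces for the dynamical system \eqref{R-DynSys}, the strategy is: isolate the piece of $u$ that controls $\nabla u$ at scale $r$, show it satisfies an inhomogeneous perturbation of \eqref{R-DynSys}, apply Grönwall, and then promote the resulting bound to an $L^2$ estimate for $\nabla u$ on the annulus $\{r<|x|<2r\}$.

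To extract the gradient-scale information, I would introduce the $\RR^n$-valued quantity
\[
\phi(r) := \frac{n}{r}\meanint_{S^{n-1}} u(r\theta)\,\theta\,d\theta,
\]
which equals $\nabla u(0)$ when $u$ is linear and in general captures the first (degree-one) spherical-harmonic component of $u$ at radius $r$. Testing \eqref{Lu=0} against $x_k\chi(|x|)$ for suitable radial cutoffs, splitting $a_{ij} = \delta_{ij} + (a_{ij}-\delta_{ij})$ to separate the Laplace part, and projecting onto first spherical harmonics yields, in the variable $t = \log(1/r)$ with $\tilde\phi(t) := \phi(e^{-t})$, an identity of the form
\[
\frac{d\tilde\phi}{dt} + R(e^{-t})\,\tilde\phi(t) = G(t),
\]
an inhomogeneous perturbation of \eqref{R-DynSys}. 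The combination $A - nA\theta\otimes\theta$ in the definition \eqref{def:R} of $R$ arises naturally from the surface integration combined with the moment identity $\meanint_{S^{n-1}}\theta_i\theta_j\,d\theta = \delta_{ij}/n$; in particular $R \equiv 0$ when $A = I$, consistent with pure Laplace. The remainder $G(t)$ stems from the coupling, through $a_{ij}-\delta_{ij}$, to higher spherical-harmonic components of $u$, and the $H^{1,2}$ assumption eliminates the singular $r^{-(n-1)}$ mode that would otherwise require a second-order ODE.

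With this identity in hand, the elementary inequality $-v\cdot Mv = -v\cdot \tfrac{M+M^*}{2} v \leq \mu[-M]\,|v|^2$ applied to $v=\tilde\phi$ gives
\[
\tfrac12\,\tfrac{d}{dt}|\tilde\phi|^2 \leq \mu[-R(e^{-t})]\,|\tilde\phi|^2 + |\tilde\phi|\,|G(t)|,
\]
and Grönwall produces
\[
|\tilde\phi(t)| \leq e^{\Phi(t)}\Bigl(|\tilde\phi(0)| + \int_0^t e^{-\Phi(s)}|G(s)|\,ds\Bigr),\qquad \Phi(t) := \int_0^t \mu[-R(e^{-s})]\,ds.
\]
The change of variables $\rho=e^{-s}$ rewrites $\Phi(t)$ as $\int_r^1 \mu[-R(\rho)]\,d\rho/\rho$ with $r=e^{-t}$, matching the target exponential. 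The initial value $|\tilde\phi(0)| = |\phi(1)|$ is controlled by $\|u\|_{L^2(B_1)}$ via interior De Giorgi--Nash--Moser regularity away from the origin. To convert the bound on $|\tilde\phi|$ into one on $g(r) := \bigl(\meanint_{r<|x|<2r}|\nabla u|^2\bigr)^{1/2}$, I would apply a Caccioppoli inequality on the annulus, dominating the $L^2$-mean of $\nabla u$ by $r^{-1}$ times the $L^2$ oscillation of $u$, and then observe that the latter is controlled by $r|\phi(r)|$ plus contributions from degree $\geq 2$ harmonics, which decay faster by spherical orthogonality.

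The main obstacle is handling the remainder $G(t)$: it involves $\nabla u$ on a neighbourhood of $|x|=e^{-t}$ and so feeds back into the quantity being estimated. The natural bound takes the form $|G(t)| \leq c\,\om(e^{-t})\,g(e^{-t})$, which turns the Grönwall estimate into a Volterra-type integral inequality for $g$. Closing this inequality requires an absorption/iteration step that crucially exploits the square-Dini integrability \eqref{square-Dini} (via Cauchy--Schwarz on the $\om$-weighted integral) together with the mild monotonicity assumption \eqref{om-notrapiddecrease}, which rules out pathological scaling of $\om$ and makes the iteration converge.
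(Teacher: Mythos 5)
Your high-level plan --- project onto first spherical harmonics, interpret the result as a perturbation of \eqref{R-DynSys}, apply Gr\"onwall, then pass back to an $L^2$-mean gradient estimate --- is the right intuition and matches the spirit of the paper. But the proposal has two genuine gaps that the paper must (and does) work hard to fill.

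First, the claimed reduction to the first-order equation $\frac{d\tilde\phi}{dt}+R(e^{-t})\tilde\phi=G(t)$ is not available directly. Testing against $\eta(r)x_\ell$ produces a \emph{second-order} ODE system for $\vec v$; it is not enough to wave at the $H^{1,2}$ assumption ``eliminating the singular mode.'' One must first eliminate $u_0$ via the $\eta=\eta(r)$ identity, then convert the resulting second-order system into the $2n\times 2n$ first-order system \eqref{DS:joint}, whose unperturbed part has eigenvalues $0$ and $n$. The finite-energy condition \eqref{psi-finite_energy} selects the branch of $\psi$ that decays, turning the $\psi$-equation into a backward integral \eqref{integraleqn:psi=}; substituting this into the $\phi$-equation yields a genuine \emph{integral} operator $S$ (not just an inhomogeneity $G$), and the key step is that $\|S\|_{X\to X}<1$ when $\e$ is small (Proposition \ref{pr:2}). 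Only after this Neumann-series argument does the bound $|\phi(t)|\le c\,{\mathcal E}(t)(\cdots)$ emerge. Your phrasing collapses this into a single Gr\"onwall step and omits the entire $\psi$-coupling, which is where $\mu[-R]$ and ${\mathcal E}$ actually enter.

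Second, the feedback between the degree-one component and the higher harmonics $w$ is two-way, not a one-way Volterra coupling. The source $F$ in \eqref{DynSys+w} depends on $\nabla w$, while $w$ solves a Poisson equation \eqref{Lap(w)=} whose right-hand side involves $\Omega\nabla w$, $\Omega\nabla(\vec v\cdot x)$ and $\Omega\nabla u_0$. The paper closes this with a contraction-mapping argument in the weighted space $Y$ (three operators $T_1,T_2,T_3$, each of norm $<1/3$), and the needed bounds on $M_{1,2}(\Delta^{-1}[\cdot]^\perp,r)$ come from potential-theoretic estimates (Proposition \ref{pr:3}) using the spherical-harmonic expansion of the Green's function --- not from Caccioppoli on annuli. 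Your suggestion that Cauchy--Schwarz on the $\om$-weighted integral and monotonicity of $\om$ close a scalar Volterra inequality for $g(r)$ does not account for the fact that both $\vec v$, $u_0$, and $w$ must be estimated \emph{simultaneously}, with the fixed point living in a function space, not in a scalar inequality. Finally, you do not address $u_0$ at all, even though $u_0'$ also contributes to $\nabla u$ and must be recovered from \eqref{eta(r)-ODE-a} and estimated.
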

\noindent
We will give an example in Section \ref{sec:GS} that shows the estimate \eqref{grad(u)-est} is sharp.
The proof of  \eqref{grad(u)-est} will  only concern $0<r<r_0$ for $r_0$ sufficiently small, since  \eqref{grad(u)-est}  is clearly true for $r_0<r<1$.
Under additional assumptions on the $a_{ij}(x)$ for $x\not= 0$, it is possible to replace the left hand side of \eqref{grad(u)-est} by a local pointwise estimate; see, for example, Theorem 2 in \cite{MM2}.  While our motivation for this study was to estimate the growth of the gradient of solutions, Theorem \ref{Main Theorem} also applies to decay. In particular, if
\begin{equation}\label{int(mu)->-infty}
 \int_r^1 \mu[-R(\rho)]\,\frac{d\rho}{\rho}\to -\infty \quad\hbox{as}\ r\to 0,
\end{equation}
then the $L^2$-mean of the gradient of solutions must vanish as $r\to 0$. In fact, in \cite{MM}  it was shown that \eqref{int(mu)->-infty} implies the stronger conclusion that $u$ is differentiable at $x=0$ and $\nabla u(0)=0$.

Let us make some additional comments. First of all, let us denote our estimator by
 \begin{equation}\label{def:E}
 E(r):= \exp\left[\int_r^1  \mu[-R(\rho)] \,\frac{d\rho}{\rho}\right].
 \end{equation}
If $\mu[-R(r)]\geq 0$, then $E(r)$ is nonincreasing in $r$, and we may have $E(r)\to\infty$ as $r\to 0$, but not quickly since $\mu[-R(r)]\to 0$ as $r\to 0$. 
Similarly, if $\mu[-R(r)]\leq 0$, then $E(r)$ is nondecreasing in $r$, and when \eqref{int(mu)->-infty} occurs we  have $E(r)\to 0$ as $r\to 0$, but not quickly. 
 In fact, for any $0<\lambda<1$, we can check that $E(r)\,r^{-\lambda}$ is decreasing and $E(r)\,r^{-\lambda}$ is increasing on $0<r<r_0$ for $r_0$ sufficiently small. Since we could replace $B_1$ in \eqref{Lu=0} by $B_{r_0}$, we will simply assume
  \begin{equation}\label{E*r^lambda-decreasing}
  E(r)\,r^{-\lambda} \ \hbox{is decreasing and} \ 
 E(r)\,r^{\lambda} \ \hbox{is increasing  for $0<r<1$.}
   \end{equation}

Secondly, regarding the proof, we will write the weak solution $u$ in the following form:
\begin{subequations}
\begin{equation}\label{u-asym}
u(x)=u_0(|x|) + \vec v(|x|)\cdot x + w(x) \quad\hbox{for}\ 0<|x|<1,
\end{equation}
where 
\begin{equation}\label{u0,vk=}
u_0(r):=\meanint_{S^{n-1}} u(r\theta)\,d\theta, \quad v_k(r)=\frac{n}{r}\meanint_{S^{n-1}} u(r\theta)\,\theta_k\,d\theta, \quad \theta_k=x_k/|x|,
\end{equation}
and 
\begin{equation}\label{int-w=0}
\meanint_{S^{n-1}} w(r\theta)\,d\theta = 0 = \meanint_{S^{n-1}} w(r\theta)\,\theta_k\,d\theta \quad\hbox{for}\ k=1,\dots,n.
\end{equation}
\end{subequations}
We shall show that the functions $\vec v$ and $\vec v\,'$ satisfy a $2n$-dimensional dynamical system that depends on $w$, and that $w$ satisfies a Poisson equation that depends on $\vec v,\vec v\,'$; this enables us to obtain estimates on $\vec v$, $\vec v\,'$, and $ \nabla w$, and then we are able to estimate $u_0'$.
In the estimates for $w$, we will use $L^p$-means over annuli: for $f\in H^{1,p}_\loc(\RR^n\backslash\{0\})$ where $1\leq p<\infty$, let us define
\begin{equation}\label{def:Mp}
M_{p}(f,r) :=\left(\meanint_{A_r} |f(x)|^p\,dx\right)^{1/p}, \quad\hbox{where}\ A_r=\{x\in\RR^n:r<|x|<2r\}.
\end{equation}
The $f$ in \eqref{def:Mp} may be scalar or vector-valued. 
Using this notation, \eqref{grad(u)-est} may be written
\begin{equation}\label{grad(u)-Lp-est}
M_{2}(\nabla u,r) \leq c\,E(r)\,\|u\|_{L^{2}(B_1)} \quad\hbox{for} \ 0<r<1/2.
\end{equation}

  \section{Estimates for an $n$-dimensional Dynamical System}\label{sec:n-dim}
  
  If we let $R_1(t)=R(e^{-t})$ for $R(t)$ as in \eqref{def:R} and $\e(t)=\om(e^{-t})$, then we can write the dynamical system \eqref{R-DynSys} as 
\begin{equation}\label{DynSys-R1}
\frac{d\phi}{dt}=-R_1(t)\phi \quad\hbox{for}\  t>0,
\end{equation}
where $|R_1(t)|\leq c\,\e(t)$ for $t>0$.
If we take scalar product with $\phi$ on both sides of \eqref{DynSys-R1}, we obtain
\[
\frac{1}{2}\frac{d}{dt}\,|\phi|^2=-(R_1 \phi,\phi)=-\frac{1}{2}([R_1(t)+R_1^*(t)]\phi,\phi).
\]
Using the variational characterization of the largest eigenvalue of a symmetric matrix, we have
\[
\frac{1}{2}\frac{d}{dt}\,|\phi|^2\leq \mu[-R_1(t)] \,|\phi|^2,
\]
and we can integrate this differential inequality to obtain
\[
|\phi(t)|\leq |\phi(0)|\exp\left(\int_0^t \mu[-R_1(s)]\,ds\right).
\]
If we define
\begin{equation}
{\mathcal E}(t):=E(e^{-t})=\exp\left(\int_0^t  \mu[-R_1(s)]\,ds\right),
\end{equation}
then we have proved the following:
\begin{proposition}\label{pr:1} 
The fundamental matrix $\Phi$ for \eqref{DynSys-R1} with $\Phi(0)=I$ satisfies the following:
\begin{equation}\label{est:Phi}
|\Phi(t)|\leq {\mathcal E}(t) \quad\hbox{for}\ t>0
\end{equation}
and
\begin{equation}\label{est:Phi*Phi^(-1)}
|\Phi(t) \Phi^{-1}(s)|\leq  \exp\left(\int_s^t \mu[-R_1(\upsilon)]\,d\upsilon\right)=\frac{{\mathcal E}(t)}{{\mathcal E}(s)}, \quad \hbox{for}\ 0<s<t.
\end{equation}
\end{proposition}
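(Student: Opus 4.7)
The plan is to observe that the computations displayed immediately before the statement already contain the heart of the proof; what remains is to upgrade the pointwise bound on individual solutions to an operator-norm bound on the fundamental matrix, and then to localize the estimate to an arbitrary starting time $s$.

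For \eqref{est:Phi}, I would fix an arbitrary $\phi_0\in\RR^n$ with $|\phi_0|=1$ and apply the identity
\[
\frac{1}{2}\frac{d}{dt}|\phi(t)|^2 = -(R_1(t)\phi(t),\phi(t)) = -\tfrac{1}{2}\bigl([R_1(t)+R_1^*(t)]\phi(t),\phi(t)\bigr)
\]
together with the variational characterization of the largest eigenvalue of $(-R_1-R_1^*)/2$ to the solution $\phi(t):=\Phi(t)\phi_0$. Integration of the resulting differential inequality from $0$ to $t$ gives $|\Phi(t)\phi_0|\leq \mathcal{E}(t)$, and taking the supremum over unit vectors $\phi_0$ yields the operator-norm bound $|\Phi(t)|\leq \mathcal{E}(t)$.

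For \eqref{est:Phi*Phi^(-1)}, the trick is simply to shift the starting time. Given $0<s<t$ and any $\psi_0\in\RR^n$, set $\psi(\tau):=\Phi(\tau)\Phi^{-1}(s)\psi_0$, so that $\psi(s)=\psi_0$ and $\psi$ still solves \eqref{DynSys-R1}. Repeating the eigenvalue estimate on the interval $[s,t]$ gives
\[
\frac{1}{2}\frac{d}{d\tau}|\psi(\tau)|^2 \leq \mu[-R_1(\tau)]\,|\psi(\tau)|^2 \qquad \text{for } s<\tau<t,
\]
and integrating from $s$ to $t$ yields $|\psi(t)|\leq |\psi_0|\exp\bigl(\int_s^t \mu[-R_1(\upsilon)]\,d\upsilon\bigr)$. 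Taking the supremum over $|\psi_0|=1$ gives the first inequality in \eqref{est:Phi*Phi^(-1)}, and the equality with ${\mathcal E}(t)/{\mathcal E}(s)$ is immediate from the definition of $\mathcal{E}$.

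There is no real obstacle here; the one point I would pause on is making sure the variational step is applied correctly to the \emph{symmetric} part of $-R_1$, since $R_1$ itself is not assumed symmetric and the quadratic form $(R_1\phi,\phi)$ only sees $(R_1+R_1^*)/2$. Once that observation is in place, both estimates are obtained by the same one-line Gronwall argument, the second being nothing more than a time-translated version of the first.
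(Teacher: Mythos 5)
Your proof is correct and follows essentially the same route as the paper: the paper derives the same scalar differential inequality for $|\phi(t)|^2$ via the variational characterization of $\mu[-R_1]$, integrates it, and declares the proposition proved, leaving implicit exactly the two routine upgrades you spell out (passing from a single solution $\Phi(t)\phi_0$ to the operator norm of $\Phi$, and time-translating the argument to start at $s$ rather than $0$). Nothing to add.
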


Of course, the fundamental matrix $\Phi$ is useful in solving the nonhomogeneous equation
\begin{equation}\label{NH-R1}
\frac{d\phi}{dt}+R_1(t)\phi=f(t).
\end{equation}
\begin{corollary}\label{cor:1} 
If ${\mathcal E}^{-1}(t)\,f(t)\in L^1(0,\infty)$ then the solution $\phi$ of \eqref{NH-R1} satisfies
\[
|\phi(t)|\leq {\mathcal E}(t) \left(|\phi(0)|+\|{\mathcal E}^{-1}\,f\|_{L^1}\right).
\]
\end{corollary}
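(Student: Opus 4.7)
The plan is to use variation of parameters with the fundamental matrix $\Phi$ from Proposition \ref{pr:1}, since \eqref{NH-R1} is a linear nonhomogeneous ODE system whose homogeneous part is exactly \eqref{DynSys-R1}. The general solution can be written as
\[
\phi(t)=\Phi(t)\phi(0)+\int_0^t \Phi(t)\Phi^{-1}(s)\,f(s)\,ds,
\]
which one verifies directly by differentiating and using $\Phi'=-R_1\Phi$.

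Next I would take norms on both sides and insert the two estimates from Proposition \ref{pr:1}: the homogeneous contribution is controlled by $|\Phi(t)|\leq {\mathcal E}(t)$ via \eqref{est:Phi}, and the integrand in the Duhamel term is controlled by $|\Phi(t)\Phi^{-1}(s)|\leq {\mathcal E}(t)/{\mathcal E}(s)$ via \eqref{est:Phi*Phi^(-1)}. Factoring out the common ${\mathcal E}(t)$ yields
\[
|\phi(t)|\leq {\mathcal E}(t)\Bigl(|\phi(0)|+\int_0^t {\mathcal E}^{-1}(s)|f(s)|\,ds\Bigr),
\]
and the inner integral is bounded by $\|{\mathcal E}^{-1}f\|_{L^1(0,\infty)}$ by the hypothesis.

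There is really no obstacle here; the corollary is a direct consequence of Proposition \ref{pr:1} together with the standard Duhamel formula. The only mild subtlety is to check that the variation of parameters formula is valid in the present setting, which follows because $\Phi(s)$ is invertible for every finite $s$ (its inverse satisfies the adjoint equation and is defined on $[0,\infty)$) and $R_1, f$ are locally integrable, so the integral makes sense and the resulting $\phi$ indeed solves \eqref{NH-R1} with initial value $\phi(0)$.
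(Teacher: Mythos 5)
Your proof is correct and follows essentially the same route as the paper: variation of constants (Duhamel) combined with the estimate $|\Phi(t)\Phi^{-1}(s)|\leq {\mathcal E}(t)/{\mathcal E}(s)$ from Proposition \ref{pr:1}. The extra remarks you add about invertibility of $\Phi(s)$ and local integrability are harmless elaborations of what the paper leaves implicit.
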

\begin{proof}
This follows immediately from \eqref{est:Phi*Phi^(-1)}  and the variation of constants formula:
\[
\phi(t)=\Phi(t)\left(\phi(0)+\int_0^t \Phi^{-1}(s)\,f(s)\,ds\right). 
\]
\end{proof}

  \section{Estimates for a $2n$-dimensional Dynamical System}\label{sec:2n-dim}

In this section we consider the $2n\times 2n$ system on $(0,\infty)$
\begin{subequations} \label{DS:joint}
\begin{equation} \label{DS:system}
\frac{d}{dt}
\begin{pmatrix}
\phi \\ \psi
\end{pmatrix} + \begin{pmatrix}
0 & 0 \\ 0 & -nI
\end{pmatrix}
\begin{pmatrix}
\phi \\ \psi
\end{pmatrix} + {\mathcal R}(t)
\begin{pmatrix}
\phi \\ \psi
\end{pmatrix} =  F(t),
\end{equation}
where i) ${\mathcal R}$ is a $2n\times 2n$ matrix of the form
\begin{equation}\label{DS:R}
{\mathcal R}(t)=
\begin{pmatrix}
R_{1}(t) & R_{2}(t) \\ R_{3}(t) & R_{4}(t)
\end{pmatrix}
\quad
\hbox{with}\ \pmb{\bigr |}R_j(t)\pmb{\bigr |}\leq \e(t)\ \hbox{on}\ 0<t<\infty,
\end{equation}
and ii)  $F=(F_1,F_2)$ satisfies
 \begin{equation}\label{DS:F1}
 {\mathcal E}^{-1}F_1 \in L^1(0,\infty), 
 \end{equation}
 and there exists $\de>0$ so that for any choice of $\alpha\in [n-\de,n)$ there is a constant $c_\alpha$ so that
\begin{equation}\label{DS:F2}
e^{\alpha t}\int_t^\infty|F_2(s)|e^{-\alpha s}\,ds \leq c_\alpha\,\e(t)\,{\mathcal E}(t)\quad\hbox{for}\ 0<t<\infty.  
\end{equation}
\end{subequations}
We want $\psi$ to satisfy the ``finite energy condition''
\begin{equation}\label{psi-finite_energy}
\int_0^\infty (|\psi|^2+|\psi_t|^2)e^{-nt}dt<\infty.
\end{equation}

\begin{proposition}\label{pr:2} 
Under the above hypotheses, all solutions $(\phi,\psi)$ of \eqref{DS:joint} that satisfy \eqref{psi-finite_energy} must satisfy the following estimates:
\begin{subequations}
\begin{equation}\label{pr2:est-phi}
|\phi(t)|\leq c\, {\mathcal E}(t) (c_\alpha+|\phi(0)|+\| {\mathcal E}^{-1}F_1\|_{L^1})
\end{equation}
\begin{equation}\label{pr2:est-psi}
|\psi(t)|\leq c\,\e(t)\, {\mathcal E}(t) (c_\alpha+|\phi(0)|+\| {\mathcal E}^{-1}F_1\|_{L^1}).
\end{equation}
\end{subequations}
\end{proposition}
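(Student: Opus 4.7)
The plan is to decouple the system by using the finite-energy constraint on $\psi$ to express it as a backward integral, then to feed the resulting bound on $\psi$ back into the $\phi$-equation and close via Corollary~\ref{cor:1}. The $\phi$-component reads $\phi' + R_1\phi = F_1 - R_2\psi$, which matches the nonhomogeneous equation treated in Section~\ref{sec:n-dim}; once $\psi$ is under control in a suitable weighted norm, Corollary~\ref{cor:1} yields \eqref{pr2:est-phi}.

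First I would extract the integral representation of $\psi$. The second row of \eqref{DS:system} reads $\psi' - n\psi = F_2 - R_3\phi - R_4\psi$, whose homogeneous solution $C e^{nt}$ is incompatible with \eqref{psi-finite_energy} unless $C=0$. Hence
\[
\psi(t) = -\int_t^\infty e^{n(t-s)}\bigl[F_2(s) - R_3(s)\phi(s) - R_4(s)\psi(s)\bigr]\,ds.
\]
Since $\alpha<n$, $e^{n(t-s)}\le e^{\alpha(t-s)}$ for $s\ge t$, so assumption \eqref{DS:F2} controls the $F_2$ contribution by $c_\alpha\,\e(t)\,{\mathcal E}(t)$. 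For the $R_3\phi$ and $R_4\psi$ contributions I would introduce the weighted sup-norms
\[
\calX := \sup_{t\ge 0}\frac{|\phi(t)|}{{\mathcal E}(t)}, \qquad \calY:=\sup_{t\ge 0}\frac{|\psi(t)|}{\e(t)\,{\mathcal E}(t)},
\]
and use the monotonicity from \eqref{E*r^lambda-decreasing}, namely ${\mathcal E}(s)\le {\mathcal E}(t)\,e^{\lambda(s-t)}$ for $s\ge t$ with $0<\lambda<1$ small, together with $\e(s)\le\e(t)$ (as $\om$ is nondecreasing in $r$). Integrating $e^{-(n-\lambda)(s-t)}\e(s)$ over $[t,\infty)$ yields a bound of the form
\[
\calY \leq c_\alpha + \frac{1}{n-\lambda}\,\calX + \frac{\e(0)}{n-\lambda}\,\calY.
\]

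For $\phi$, Corollary~\ref{cor:1} applied to $\phi' + R_1\phi = F_1 - R_2\psi$ gives
\[
|\phi(t)|\le {\mathcal E}(t)\Bigl(|\phi(0)| + \|{\mathcal E}^{-1}F_1\|_{L^1} + \int_0^\infty {\mathcal E}^{-1}(s)\,\e(s)\,|\psi(s)|\,ds\Bigr),
\]
and the last integral is dominated by $\calY\int_0^\infty \e^2(s)\,ds$; the latter equals $\int_0^1 \om^2(r)/r\,dr<\infty$ by the square-Dini hypothesis \eqref{square-Dini}. Writing $K:=|\phi(0)|+\|{\mathcal E}^{-1}F_1\|_{L^1}$, this delivers $\calX \le K + C_\e\,\calY$ with $C_\e := \int_0^\infty \e^2$.

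The remaining step is to close the two linear inequalities. The obstacle is that the self-interaction coefficients $\e(0)/(n-\lambda)$ and $C_\e$ need not be small a priori, so the $2\times 2$ linear map $(\calX,\calY)\mapsto$(new bounds) is not automatically contractive. I would overcome this by running the argument on the tail $[T,\infty)$ with $T$ chosen so large that both $\e(T)/(n-\lambda)$ and $\int_T^\infty \e^2$ are, say, less than $1/4$ (possible by $\e\to 0$ and square-Dini); the two-line Gronwall loop then closes and gives $\calX,\calY \lesssim K + c_\alpha$ on $[T,\infty)$. On the compact interval $[0,T]$ the bounds are trivial (up to an enlarged constant) since ${\mathcal E}$, $\e$ and $\e^{-1}$ are bounded there. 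This tail/compact-part split is consistent with the reduction to small $r_0$ mentioned after Theorem~\ref{Main Theorem}.
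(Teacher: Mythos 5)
Your argument is correct and takes a genuinely different route from the paper's. The paper eliminates $\psi$ entirely: it introduces the fundamental matrix $\Psi$ for $\dot\psi+R_4\psi=0$, establishes $|\Psi(t)\Psi^{-1}(s)|\le e^{\delta|t-s|}$ by Gronwall, substitutes the resulting integral expression for $\psi$ into the $\phi$-equation, and proves that the single integral operator $S$ acting on $\phi$ is a contraction on $X=\{\phi:\sup_t|\phi(t)|/{\mathcal E}(t)<\infty\}$, checking separately that the inhomogeneous terms $\eta_0,\eta_1,\eta_2$ lie in $X$. You instead keep $R_4\psi$ as a forcing term, introduce a second weighted norm ${\mathcal Y}=\sup_t|\psi(t)|/(\varpi(t){\mathcal E}(t))$ alongside ${\mathcal X}=\sup_t|\phi(t)|/{\mathcal E}(t)$, and close a $2\times 2$ system of linear inequalities; the self-interaction coefficient $\varpi(T)/(n-\lambda)$ on the $\psi$-line plays the role the paper assigns to the Gronwall bound on $\Psi\Psi^{-1}$. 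Both proofs need the same reduction to make the small constants truly small: the paper writes ``without loss of generality $\varpi<\delta$ and $\int\varpi^2<\delta$,'' which is exactly your tail/compact split $[T,\infty)\cup[0,T]$, just phrased as a rescaling. The coupled-inequality form is arguably a little more transparent about where the smallness is spent, at the cost of tracking two norms.

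One caveat that applies equally to your argument and to the paper's: the estimate is derived assuming a priori that the weighted sup-norms (your ${\mathcal X},{\mathcal Y}$; the paper's $\|\phi\|_X$) are finite, and that the backward integral representation for $\psi$ is legitimate. Neither proof explicitly verifies this finiteness for an arbitrary finite-energy solution; what the contraction/linear-system argument really produces is the unique solution in $X$ with the prescribed data, together with its bound. This is harmless for the way Proposition~\ref{pr:2} is used in Section~\ref{sec:Proof} (there one constructs $\widetilde u$ by solving with $\phi(0)=\psi(0)=0$, so the proposition is applied in the existence direction, and the final identification $\widetilde u=\chi u$ comes from the maximum principle), but if you want your version to stand on its own as a genuine a priori estimate, you should first show ${\mathcal X}_T,{\mathcal Y}_T<\infty$ — e.g.\ prove a crude bound $|\phi(t)|\le Ce^{\varepsilon t}$, $|\psi(t)|\le Ce^{(n-\varepsilon')t}$ for small $\varepsilon,\varepsilon'$, which puts $\phi,\psi$ into the weighted spaces because ${\mathcal E}(t)\,e^{\lambda t}$ is increasing by \eqref{E*r^lambda-decreasing}. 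Similarly, on $[0,T]$ your Gronwall bound for $\phi(T)$ involves $\int_0^T|\psi|$, which is controlled by the finite-energy norm but not directly by $c_\alpha+|\phi(0)|+\|{\mathcal E}^{-1}F_1\|_{L^1}$; this too is absorbed by the rescaling once the a priori finiteness is in place.
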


\begin{proof}
As in the previous section, we let $\Phi$ denote the fundamental solution for $\dot\phi+R_1\phi=0$ on $[0,\infty)$ with $\Phi(0)=I$.
 We need to estimate the fundamental solution $\Psi(t)$ for $\dot\psi+R_4\psi=0$ with $\Psi(0)=I$.  Since $\e(t)\to 0$ as $t\to\infty$ and $\int_0^\infty\e^2(t)\,dt<\infty$, for any $\delta\in (0,1)$ we have $\e(t)<\delta$ and $\int_{t_0}^\infty<\delta$ for $t_0$ sufficiently large. 
Without loss of generality, we will assume that
\begin{equation}\label{small_epsilon}
\e(t)<\delta \quad \hbox{and} \quad \int_0^\infty \e^2(t)\,dt<\delta.
\end{equation}
Note that this $\delta$ can be made small independent of other constants that may appear.
We can now use Gronwall's inequality to conclude
\begin{equation}\label{PsiPsi^(-1)-estimate} 
|\Psi(t) \Psi^{-1}(s)|\leq e^{\delta|t-s|} \quad\hbox{for}\ t,s>0.
\end{equation}

Applying variation of constants in the first equation in \eqref{DS:system}, we can write
\begin{subequations}
\begin{equation}\label{integraleqn:phi=}
\phi(t)=\Phi(t)\phi(0)+\Phi(t)\int_0^t \Phi^{-1}(s) (F_1(s)-R_2(s)\psi(s))\,ds.
\end{equation}
Variation of constants in the second equation in \eqref{DS:system} combined with \eqref{psi-finite_energy} yields
\begin{equation}\label{integraleqn:psi=}
\psi(t)=e^{nt}\Psi(t)\int_t^\infty \Psi^{-1}(s)(R_3(s)\phi(s)-F_2(s))\,e^{-ns}\,ds.
\end{equation}
\end{subequations}
If we plug \eqref{integraleqn:psi=} into \eqref{integraleqn:phi=} we obtain the following equation for $\phi$:
\begin{subequations}\label{eq:phi}
\begin{equation}
\phi(t)+S\phi(t)=\eta_0(t)+\eta_1(t)+\eta_2(t),
\end{equation}
where
\begin{equation}\label{def:S}
\begin{aligned}
S\phi(t)&=-\Phi(t)\int_0^t \Phi^{-1}(s) \,R_2(s) \,e^{ns}\,\Psi(s)\int_s^\infty \Psi^{-1}(\upsilon)\,R_3(\upsilon)\,\phi(\upsilon)\,e^{-nu}\,d\upsilon\,ds, \\
&\eta_0(t)=\Phi(t)\phi(0), \qquad\quad \eta_1(t)=\Phi(t)\int_0^t \Phi^{-1}(s)\,F_1(s)\,ds, \\
\eta_2(t)&=\Phi(t) \int_0^t \Phi^{-1}(s)\,R_2(s)\,e^{ns}\,\Psi(s)\int_s^\infty \Psi^{-1}(\upsilon)\,F_2(\upsilon) \,e^{-n\upsilon}\,d\upsilon\,ds.
\end{aligned}
\end{equation}
\end{subequations}
We want to show \eqref{eq:phi} has a solution $\phi$ and estimate it in terms of $\e$, ${\mathcal E}$, and $F$. 

Let $X=C([0,\infty),\RR^n)$ with the norm 
\begin{equation}\label{def:X}
\|\phi\|_{X}:=\sup\left\{\frac{|\phi(t)|}{{\mathcal E}(t)}:0<t<\infty\right\}.
\end{equation}
Assume $\|\phi\|_X=1$, so $|\phi(t)|\leq {\mathcal E}(t)$. Using this,  \eqref{est:Phi*Phi^(-1)}, and \eqref{PsiPsi^(-1)-estimate}, we obtain the estimate
\begin{equation}\label{est:S(phi)}
\begin{aligned}
|S\phi(t)|& \leq \int_0^t |\Phi(t)\Phi^{-1}(s)\,|R_2(s)| \,e^{(n-\delta)s}\int_s^\infty e^{(\delta-n)\upsilon}|R_3(\upsilon)|{\mathcal E}(\upsilon)\,d\upsilon\,ds \\
& \leq c\, {\mathcal E}(t)\int_0^t  ({\mathcal E}(s))^{-1}
\,\e^2(s) \,e^{(n-\delta)s}\int_s^\infty e^{(\delta-n)\upsilon}{\mathcal E}(\upsilon)\,d\upsilon\,ds.
\end{aligned}
\end{equation}
Regarding the last integral, we can use integration by parts to obtain
\[
\int_s^\infty e^{(\delta-n)\upsilon}{\mathcal E}(\upsilon)\,d\upsilon=\frac{e^{(\delta-n)s}{\mathcal E}(s)}{n-\delta}+\frac{1}{n-\delta}\int_s^\infty e^{(\delta-n)\upsilon}{\mathcal E}(\upsilon)\,\mu[-R_1(\upsilon)]\,d\upsilon.
\]
But, using $|\mu[-R_1(t)]|\leq c\,\e(t)$ and \eqref{small_epsilon}, we can take $\delta$ small enough that 
\[
\int_s^\infty e^{(\delta-n)\upsilon}{\mathcal E}(\upsilon)\,d\upsilon\leq \frac{e^{(\delta-n)s}{\mathcal E}(s)}{n-\delta}+\frac{1}{n-\delta}\int_s^\infty e^{(\delta-n)\upsilon}{\mathcal E}(\upsilon)\,d\upsilon.
\]
By iterating this, we obtain
\begin{equation}\label{int-exp*E}
\int_s^\infty e^{(\delta-n)\upsilon}{\mathcal E}(\upsilon)\,d\upsilon\leq c\, e^{(\delta-n)s}{\mathcal E}(s).
\end{equation}
Using this in \eqref{est:S(phi)} we obtain
\[
|S\phi(t)|\leq c\,{\mathcal E}(t) \int_0^t \e^2(s)\,ds\leq c\,{\mathcal E}(t)\,\delta.
\]
Taking $\delta<c^{-1}$, we indeed have $\|S\|_{X\to X}<1$.
To verify that $\eta_0,\eta_1\in X$ we use Proposition \ref{pr:1} and Corollary \ref{cor:1} to conclude
\[
|\eta_0(t)+\eta_1(t)|\leq {\mathcal E}(t)\left( |\phi(0)|+\|{\mathcal E}^{-1}F_1\|_{L^1}\right).
\]
  To show $\eta_2\in X$, we use  \eqref{est:Phi*Phi^(-1)}, \eqref{int-exp*E}, and \eqref{DS:F2} with $\alpha=n-\delta$ to estimate
 \[
 \begin{aligned}
 |\eta_2(t)| & \leq c\,{\mathcal E}(t)\int_0^t ({\mathcal E}(s))^{-1}\,\e(s)\,e^{(n-\delta)s}\int_s^\infty e^{(\delta-n)\upsilon}\,F_2(\upsilon)\,d\upsilon\,ds \\
& \leq  c\,{\mathcal E}(t)\int_0^t  ({\mathcal E}(s))^{-1}\,c_\alpha\,\e^2(s)\,ds <c\,c_\alpha\,{\mathcal E}(t). 
\end{aligned}
 \]
 We conclude that there is a solution $\phi\in X$ of \eqref{eq:phi} satisfying \eqref{pr2:est-phi}.
Using \eqref{integraleqn:psi=} we find $\psi$ and hence the solution $(\phi,\psi)$ of \eqref{DS:joint}.  
To estimate $\psi$ we use \eqref{DS:F1}, \eqref{integraleqn:psi=}, and \eqref{pr2:est-phi} to obtain 
 \[
 \begin{aligned}
 |\psi(t)| & \leq e^{(n-\delta)t}\int_t^\infty e^{(\delta-n)s}(\e(s)|\phi(s)|+|F_2(s)|)\,ds \\
 & \leq c\,\e(t)\,{\mathcal E}(t)\left(c_\alpha + |\phi(0)|+\|{\mathcal E}^{-1}F_1\|_{L^1} \right),
 \end{aligned}
 \]
which is  \eqref{pr2:est-psi}.
\end{proof}

  \section{Potential Theory Estimates}\label{sec:PotentialThry}
  
  In the proof of Theorem \ref{Main Theorem}, we will encounter the equation $ - \Delta w=g$ in $\RR^n\backslash\{0\}$,
  where $g$ is a distribution with certain orthogonality properties that we will now describe. 
  For $f\in L^1_{\loc}(\RR^n\backslash\{0\})$, let us define
   \begin{equation}\label{Pf=}
  Pf(r\theta):=\meanint_{S^{n-1}} f(r\phi)\,d\phi+n\theta_k\meanint_{S^{n-1}} \phi_k\,f(r\phi)\,d\phi.
  \end{equation}
  Using 
  \[
  \meanint_{S^{n-1}}\theta_k\theta_\ell\,d\theta=\frac{1}{n}\delta_{k\ell} \quad\hbox{for}\ k,\ell=1,\dots,n,
  \]
  we see that $P^2f=Pf$, and  $Pf$ is the projection of $f$ onto the functions on $S^{n-1}$ spanned by $1,\theta_1,\dots,\theta_n$. 
  Let us define
  \begin{equation}\label{def:perp}
  f(r\theta)^\perp:=f(r\theta)-Pf(r\theta).
  \end{equation}
  To control the growth of solutions near $x=0$ we will not only use the $L^p$-mean over annuli as defined in \eqref{def:Mp}, but the following to control first and second derivatives:
    \begin{equation}\label{def:Mk,p}
    \begin{aligned}
    M_{1,p}(f,r):&=rM_p(\nabla w,r)+M_p(w,r) \\
    M_{2,p}(f,r):&=r^2M_p(D^2 w,r)+M_{1,p}(w,r).
    \end{aligned}
      \end{equation}
      
  The two equations that we want to solve are
  \begin{subequations}
   \begin{equation}\label{eq:Laplace(w)=f}
  -\Delta w= f^\perp \quad\hbox{in}\ \RR^n\backslash\{0\} 
  \end{equation}
  and
  \begin{equation}\label{eq:Laplace(w)=div(f)}
  -\Delta w= [\,{\rm div}\vec f\,]^\perp \quad\hbox{in}\ \RR^n\backslash\{0\}.
  \end{equation}
  \end{subequations}
  In both cases, we will obtain $w$ through convolution with the fundamental solution $\Gamma(|x|)$ for $-\Delta$. 
  The following was proved in \cite{MM} (see Propositions 1 and 2).
  
  \begin{proposition}\label{pr:3} 
  Let $n\geq 2$ and $1<p<\infty$.
  \begin{itemize}
  \item[a)] If $f\in L^p_{\loc}(\RR^n\backslash\{0\})$ satisfies 
  \[
  \int_{|x|<1} |f(x)|\,|x|^2\,dx<\infty \quad\hbox{and}\quad \int_{|x|>1} |f(x)|\,|x|^{-n}\,dx <\infty,
  \]
  then convolution by $\Gamma$ defines a solution $w\in H^{2,p}_{\loc}(\RR^n\backslash\{0\})$ of \eqref{eq:Laplace(w)=f} satisfying
  \[
  M_{2,p}(w,r)\leq c\left(r^{-n} \int_0^r M_p(f,\rho)\,\rho^{n+1}\,d\rho + r^2\int_r^\infty M_p(f,\rho)\,\rho^{-1}\,d\rho\right).
  \]
   \item[b)] If $ \vec f=(f_i)$ with $f_i\in L^p_{\loc}(\RR^n\backslash\{0\})$ satisfies 
  \[
  \int_{|x|<1} |\vec f(x)|\,|x|\,dx<\infty \quad\hbox{and}\quad \int_{|x|>1} |\vec f(x)|\,|x|^{-n-1}\,dx <\infty,
  \]
  then convolution by $\Gamma$ defines a solution $w\in H^{1,p}_{\loc}(\RR^n\backslash\{0\})$ of \eqref{eq:Laplace(w)=div(f)} satisfying
  \[
  M_{1,p}(w,r)\leq c\left(r^{-n} \int_0^r M_p(\vec f,\rho)\,\rho^{n}\,d\rho + r^2\int_r^\infty M_p(\vec f,\rho)\,\rho^{-2}\,d\rho\right).
  \]
  \end{itemize}
  \end{proposition}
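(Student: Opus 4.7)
My plan is to construct $w$ explicitly as a Newtonian potential and then exploit the spherical orthogonality of $f^\perp$ (resp.\ $[\dive\vec f\,]^\perp$) with the functions $1,\theta_1,\ldots,\theta_n$ to gain extra decay in the kernel away from $|y|\sim|x|$. For part (a) I would set $w(x)=\int_{\RR^n}\Gamma(x-y)\,f^\perp(y)\,dy$; for part (b) I would integrate by parts in $y$ and write $w(x)=-\int_{\RR^n}\partial_k\Gamma(x-y)\,f_k^\perp(y)\,dy$, effectively replacing $\Gamma$ by $\nabla\Gamma$. The integrability hypotheses on $f$ (resp.\ $\vec f$) ensure absolute convergence and that $w$ does satisfy the corresponding Poisson equation off the origin.

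For fixed $x$ with $|x|\sim r$ I split the convolution into three regions: an inner ball $\Omega_1=\{|y|<r/4\}$, a transition annulus $\Omega_2=\{r/4\le|y|\le 4r\}$, and the outer region $\Omega_3=\{|y|>4r\}$. On $\Omega_1$ I Taylor expand $\Gamma(x-y)$ in $y$ at the origin,
\[
\Gamma(x-y)=\Gamma(x)-y_k\,\partial_k\Gamma(x)+O(|y|^2\,|x|^{-n});
\]
the first two terms have the form $a(r)+b_k(r)\theta_k$ as functions of $\theta=y/|y|$ on each sphere, so the orthogonality of $f^\perp$ annihilates them and only the remainder contributes. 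On $\Omega_3$ I Taylor expand in $x$ at the origin,
\[
\Gamma(x-y)=\Gamma(y)-x_k\,\partial_k\Gamma(y)+O(|x|^2\,|y|^{-n}),
\]
where $\Gamma(y)$ is radial in $y$ and $\partial_k\Gamma(y)$ is a radial function times $y_k/|y|$, so both leading terms are again killed by the orthogonality of $f^\perp$. On the middle annulus $\Omega_2$ no cancellation is available, but there standard Newtonian-potential and Calder\'on--Zygmund estimates give control of $M_p(w,r)$, $M_p(\nabla w,r)$, and $M_p(D^2 w,r)$ directly by a multiple of $M_p(f,r)$.

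To pass to derivatives one differentiates the remainder kernels under the integral on $\Omega_1$ and $\Omega_3$, each derivative costing one power of $|x|$; then convert to polar coordinates $y=\rho\theta$ and apply H\"older on each sphere to replace $\int_{S^{n-1}}|f^\perp(\rho\theta)|\,d\theta$ by a constant multiple of $M_p(f,\rho)$. The $\Omega_1$ contribution then carries the weight $\rho^{n+1}r^{-n}$ (from $|y|^2|x|^{-n}$ together with the spherical volume element $\rho^{n-1}$), and the $\Omega_3$ contribution carries $r^2\rho^{-1}$; the $\Omega_2$ term is absorbed into the $\Omega_3$ integral evaluated near $\rho=r$. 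Summing gives the bound on $M_{2,p}(w,r)$ in part (a). Part (b) follows by the same scheme with $\nabla\Gamma$ replacing $\Gamma$: each Taylor gain loses one power of $|y|$ or $|x|$, producing the weights $\rho^{n}$ and $\rho^{-2}$ that appear in the statement.

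The main technical difficulty is the $D^2 w$ estimate on $\Omega_2$, where the kernel $D^2\Gamma(x-y)$ is a Calder\'on--Zygmund singular integral rather than being absolutely integrable, so one must invoke $L^p$-boundedness of Riesz transforms to control $M_p(D^2 w,r)$ (and $1<p<\infty$ is essential for this). A secondary point is the logarithmic fundamental solution when $n=2$ and the justification of differentiation under the integral near the endpoints of the splitting; these details are carefully carried out in Propositions 1--2 of \cite{MM}, so I would quote them directly for the final verification.
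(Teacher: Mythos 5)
Your sketch is sound and exploits exactly the cancellation on which the paper's proof (delegated to Propositions 1--2 of \cite{MM}) rests: the orthogonality of $f^\perp$ to the degree-$0$ and degree-$1$ spherical harmonics on each sphere $|y|=\rho$. The difference is one of bookkeeping. The paper uses the full expansion \eqref{Gamma-expansion1} of $\Gamma$ in spherical harmonics (for $|x|<|y|$, and the mirror expansion for $|y|<|x|$); the $k=0,1$ terms drop by orthogonality and the geometric tail $\sum_{k\geq 2}(|x|/|y|)^k$ produces exactly the extra factor $|x|^2/|y|^2$. You instead subtract the first two Taylor terms of $\Gamma(x-y)$ in $y$ (on $\Omega_1$) or in $x$ (on $\Omega_3$), which are precisely the $k=0,1$ pieces of that expansion, and bound the Taylor remainder by $O(|y|^2|x|^{-n})$ resp.\ $O(|x|^2|y|^{-n})$. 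Either route gives the weights $\rho^{n+1}r^{-n}$ and $r^2\rho^{-1}$ in part (a), and the derivative count is also consistent: subtracting the same two terms from $\partial_\ell\Gamma$, $\partial_k\partial_\ell\Gamma$ is still legitimate since the subtracted functions remain in the span of $\{1,\theta_1,\dots,\theta_n\}$ in the relevant variable, so each $x$-derivative costs one power of $r$, matching the weights in $M_{2,p}$. The three-annulus split $\Omega_1,\Omega_2,\Omega_3$ is also implicit in the series approach, since the expansion only converges for $|x|<|y|$ or $|y|<|x|$; both proofs must treat the transition annulus by direct Calder\'on--Zygmund estimates, and your observation that $1<p<\infty$ is essential there is correct.

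Two points deserve a caveat. First, in part (b) your formula $w(x)=-\int\partial_k\Gamma(x-y)f_k^\perp(y)\,dy$ implicitly identifies $[\dive\vec f\,]^\perp$ with $\dive(\vec f^{\,\perp})$; these are not equal, because $P$ does not commute with $\dive$. The right object is $w=\Gamma*[\dive\vec f\,]^\perp$, and one should carry out the integration by parts against the projected kernel $(I-P_y)\Gamma(x-\cdot)$ rather than projecting $\vec f$ componentwise; the resulting estimates are the same but the derivation needs this adjustment. Second, the absorption of the $\Omega_2$ contribution ``into the $\Omega_3$ integral near $\rho=r$'' is quicker to say than to do, since the annuli $A_\rho$ overlap and $M_p(f,\cdot)$ is not assumed monotone; one needs either a small continuous averaging argument over $\rho\in(r/2,r)\cup(r,2r)$ or to state the bound directly in terms of $\|f\|_{L^p}$ over a fixed dilation of $A_r$, which then sits inside the two integrals by a Fubini/covering argument. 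Neither issue changes the outcome, and since both you and the paper ultimately defer the verification to \cite{MM}, the proposal is acceptable as a proof sketch.
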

  \noindent
The proof of this proposition in \cite{MM} uses an expansion of $\Gamma$ in spherical harmonics.
  For example, for $|x|<|y|$, $\Gamma(|x-y|)$ can be written as a convergent series\footnote{We note that the coefficients $a_{k,m}$ were unfortunately omitted in the corresponding formula (29) in \cite{MM}.}
\begin{equation} \label{Gamma-expansion1}
\Gamma(|x-y|)=\sum_{k=0}^\infty \,\frac{|x|^k}{|y|^{n-2+k}}\sum_{m=1}^{N(k)}a_{k,m}\,\varphi_{k,m}\left(\hat{x}\right)\,\varphi_{k,m}\left(\hat{y}\right),
\end{equation}
where $\varphi_{k,m}$ for $m=1,\dots,N(k)$ is an orthonormal basis for the space ${\mathcal H}(k)$ of spherical harmonics of degree $k$, $N(k)={\rm dim}({\mathcal H}(k)$, $\hat x=x/|x|$, 
and $a_{k,m}$ are  certain coefficients. 
      
  \section{Proof of Theorem 1}\label{sec:Proof}

We assume that $u\in H^{1,2}(B_1)$ is a weak solution of \eqref{Lu=0}. If we introduce a smooth function $\chi(r)$ satisfying $\chi(r)=1$ for $0\leq r\leq 1/4$ and $\chi(r)=0$ for $r\geq 1/2$, then
\[
\partial_j(a_{ij}\partial_i(\chi u))=\partial_j(a_{ij}\chi'\theta_i u)+\chi'\theta_ja_{ij}\partial_i u,
\]
where (using the definition of weak solution)
\[
\int_{B_1} \chi'\theta_j\,a_{ij}\partial_i u\,dx=\int_{B_1} a_{ij}\partial_i u\,\partial_j\chi\,dx=0.
\]
To use the results of the previous section, we need a problem defined on $\RR^n$, so let us
 extend $a_{ij}$ and $\om(r)$ to $\RR^n$ by defining 
\begin{equation}\label{a_{ij}_r>1}
a_{ij}(x)=\delta_{ij}\ \hbox{for}\ |x|>1 \quad\hbox{and}\quad \om(r)=\om(1) \ \hbox{for}\ r>1.
\end{equation}
Now let us consider the problem
\begin{subequations}
\begin{equation}\label{Lu=f}
\partial_j (a_{ij}(x)\,\partial_i \widetilde u)={\rm div}\vec f +f_0, \quad\hbox{for $x\in \RR^n$},
 \end{equation}
where 
\begin{equation}\label{def:f,f0}
f_j:=a_{ij}\chi'\theta_i u \quad\hbox{and}\quad f_0:=\chi'\theta_ja_{ij}\partial_i u
 \end{equation}
are supported in $1/4<|x|<1/2$ and satisfy
\begin{equation}\label{est:f,f0}
\|\vec f\|_{L^2}+\|f_0\|_{L^2}\leq c\,\|u\|_{H^{1,2}(B_{1/2})}\leq c\,\|u\|_{L^2(B_1)},
 \end{equation}
  where we have also used the Cacciopolli inequality,
Moreover, as observed above, we have
 \begin{equation}\label{int(f0)=0}
 \int_{\RR^n} f_0(x)\,dx=0.
 \end{equation}
 \end{subequations}
 We want $ \widetilde u\in H^{1,2}(\RR^n)$ to be a weak solution of \eqref{Lu=f} in the form
 \begin{equation}\label{tilde-u-asym}
\widetilde u(x)=u_0(|x|) + \vec v(|x|)\cdot x + w(x) \quad\hbox{for}\ x\in\RR^n\backslash\{0\},
\end{equation}
where the $u_0$, $\vec v$ and $w$ still satisfy the conditions
\eqref{u0,vk=} and \eqref{int-w=0},
In particular, if we let $C^\infty_0(\RR^n$ denote the smooth functions with compact support, we have
\begin{equation}\label{weak-Lu=f}
\int_{\RR^n} a_{ij}(x)\,\partial_i \widetilde u\,\partial_j \eta \,dx=\int_{\RR^n} f_i\,\partial_i\eta\,dx-\int_{\RR^n} f_0\,\eta\,dx
\quad\hbox{for all}\ \eta\in C^\infty_0(\RR^n).
 \end{equation}
 After we find $\widetilde u$ and estimates on $u_0$, $\vec v$, and $w$, we will show $\widetilde u=\chi\,u$.

If we plug \eqref{u-asym} into \eqref{weak-Lu=f} and take $\eta=\eta(r)$, we obtain (cf.\ (55a) in \cite{MM}):
\begin{subequations}
\begin{equation}\label{eta(r)-ODE-a}
\alpha(r)\,u_0'(r)+r\,\vec\beta(r)\cdot\vec v\,'(r)+\vec\gamma(r)\cdot\vec v(r)+p[\nabla w](r)=\widetilde f(r)+r^{1-n}\int_0^r \overline{f_0}(\rho)\rho^{n-1}\,d\rho,
\end{equation}
where
\begin{equation} \label{eta(r)-ODE-b}
\begin{aligned}
\alpha(r)=\meanint_{S^{n-1}} a_{ij}(r\theta)\theta_i\theta_j\,ds_\theta,&
\qquad \beta_k(r)=\meanint_{S^{n-1}} a_{ij}(r\theta)\theta_i\theta_j\theta_k\,ds_\theta,
  \\ 
\gamma_j(r)=\meanint_{S^{n-1}} a_{ij}(r\theta)\theta_i\,ds_\theta,& \qquad
p[\nabla w](r)=\meanint_{S^{n-1}}a_{ij}(r\theta)\,\del_jw(r\theta)\,\theta_i\,ds_\theta, 
\\
\widetilde f(r)=\meanint_{S^{n-1}} f_i(r\theta)\theta_i\,d\theta, & \qquad \overline{f_0}(r)=\meanint_{S^{n-1}} f_0(r\theta)\,d\theta.
\end{aligned}
\end{equation}
Using \eqref{a_ij-delta_ij}, for $0<r<1$ we see that these coefficients satisfy 
\begin{equation} \label{eta(r)-ODE-c}
\begin{aligned}
|\alpha(r)-1|,\ & |\vec\beta(r)|,\ |\vec \gamma(r)|\, \leq \om(r)\, \\
|p[\nabla w](r)|& \leq \om(r)\,\meanint_{S^{n-1}} |\nabla w|\,ds,
\end{aligned}
\end{equation}
\end{subequations}
while for $r>1$ we have $\alpha(r)=1$ and $\beta_k(r)=\gamma_k(r)=p[\nabla w](r)=0$. Without loss of generality, we may assume $\alpha(r)>0$ for all $r>0$.

Similarly, we can take $\eta=\eta(r)x_\ell$ in \eqref{weak-Lu=f} and obtain a second-order liner system of ODEs. But we can use \eqref{eta(r)-ODE-a} to eliminate $u_0$ and then reduce the second-order system for $\vec v$ to a first-order system for $(\vec v,\vec v_r)$; of course these systems also depend on $w$. The first-order system is simplified if we change the independent variable to $t=-\log r\in (-\infty,\infty)$.
This brings the dynamical system \eqref{DS:joint} into play.
If we follow the calculations in \cite{MM}, we find that the first-order system for $(\vec v,\vec v_r)$ that depends on $w$ may be converted to the form
\begin{equation} \label{DynSys+w}
\frac{d}{dt}
\begin{pmatrix}
\phi \\ \psi
\end{pmatrix} + \begin{pmatrix}
0 & 0 \\ 0 & -nI
\end{pmatrix}
\begin{pmatrix}
\phi \\ \psi
\end{pmatrix} + {\mathcal R}(t)
\begin{pmatrix}
\phi \\ \psi
\end{pmatrix} =  F(t,\nabla w)+G(t),
\end{equation}
where ${\mathcal R}(t)=0$ for $t<0$, but for $t>0$ it is of the form \eqref{DS:R} with
\[
R_1(t) = \meanint_{S^{n-1}}\left(A(r\theta)-nA(r\theta)\theta\otimes\theta\right)\,d\theta\,(1+O(\e^2(t))\quad\hbox{as}\ t\to\infty.
\]
The term $F(t,\nabla w)$ vanishes for $t<0$, but 
\begin{equation}\label{F(w)-condition}
|F(t,\nabla w)| \leq c\,\e(t)\,\meanint_{S^{n-1}}|\nabla w|\,d\theta \quad\hbox{for}\ t>0,
\end{equation}
and the term $G(t)$ comes from $\vec f$ and $f_0$, so is supported in $\log 2\leq t\leq 2\log 2$ and we can estimate
\begin{equation}\label{G(t)-condition}
\|{\mathcal E}^{-1}G\|_{L^1}\leq c\,(\|\vec f\|_{L^2}+\|f_0\|_{L^2}).
\end{equation}
Moreover, the relationship between $(\phi,\psi)$ and $(\vec v,\vec v_t)$ satisfies:
 \begin{equation} \label{phi=nv}
 \begin{pmatrix}
\phi \\ \psi
\end{pmatrix}
-\frac{1}{n^2}
 \begin{pmatrix}
n\vec v-\vec v_t \\ \vec v_t
\end{pmatrix}
\leq
c\,\e(t)\left(
|\vec v(t)|+|\vec v_t(t)|+\meanint_{S^{n-1}}|\nabla w|d\theta
\right).
\end{equation}

Now, given $w$ with certain properties, we solve \eqref{DynSys+w} with initial conditions $\phi(0)=0=\psi(0)$ to find $\phi,\psi$ and hence $\vec v$, $\vec v_r$. However, we need to separately control the dependence of $\vec v$ on $w$, so let us write $\vec v=\vec v\,^w+\vec v\,^\circ$ where $\vec v\,^w$ is the solution with $G=0$ and $\vec v\,^\circ$ is the solution with $F=0$. Similarly, when we use \eqref{eta(r)-ODE-a} to solve for $u_0$ in terms of $\vec v$ and $w$, we want to write $u_0=u_0^w+u_0^\circ$ where $u_0^w$ depends  on $w$ (including $\vec v\,^w$), while $u_0^\circ$ is independent of $w$:
\[
\begin{aligned}
u'_0&=(u_0^w)'+(u_0^\circ)'=\alpha^{-1}\left(-r\vec\beta\cdot(\vec v\,^w)'-\vec \gamma\cdot \vec v\,^w-p[\nabla w]\right) \\
&+\alpha^{-1}\left(\widetilde f(r)+r^{1-n}\int_0^r \overline{f_0}(\rho)\rho^{n-1}\,d\rho-r\vec\beta\cdot(\vec v\,^\circ)'-\vec \gamma\cdot \vec v\,^\circ\right)
\end{aligned}
\]

Finally, let us find the PDE that $w$ must satisfy.
As in \cite{MM}, let us introduce the matrix
\begin{equation}
\Omega_{ij}(x)=a_{ij}(x)-\delta_{ij},
\end{equation}
which satisfies $|\Omega_{ij}(x)|\leq \om(|x|)$ for $|x|<1$ and $\Omega(x)=0$ for $|x|>1$. Recalling that $Pw=0$ and the notation \eqref{def:perp}, we find that $w$ satisfies
\[
\Delta w+[{\rm div}(\Omega\nabla w)]^\perp+ [{\rm div}(\Omega\nabla (\vec v\cdot x))]^\perp+
 [{\rm div}(\Omega\nabla (u_0)]^\perp=[\partial_if_i+f_0]^\perp.
\]
But we want to write this as
\begin{equation}\label{Lap(w)=}
\begin{aligned}
\Delta w&+[{\rm div}(\Omega\nabla w)]^\perp+ [{\rm div}(\Omega\nabla (\vec v\,^w\cdot x))]^\perp+
 [{\rm div}(\Omega\nabla (u^w_0)]^\perp= \\
& [\partial_if_i+f_0-{\rm div}(\Omega\nabla (\vec v\,^\circ\cdot x)) - {\rm div}(\Omega\nabla (u_0^\circ)]^\perp.
 \end{aligned}
\end{equation}

If we apply $\Delta^{-1}$ (i.e.\ convolution by the fundamental solution) to both sides of  \eqref{Lap(w)=}, we obtain
\begin{subequations}\label{w+T(w)=eta}
\begin{equation}
w+T_1(w)+T_2(w)+T_3(w)=\xi,
\end{equation}
where 
\begin{equation}
T_1(w)=\Delta^{-1}\left[{\rm div}(\Omega\nabla w)\right]^\perp
\end{equation}
\begin{equation}
T_2(w)=\Delta^{-1}\left[{\rm div}(\Omega\nabla(\vec v^{\,w}\cdot x))\right]^\perp
\end{equation}
\begin{equation}
T_3(w)=\Delta^{-1}\left[{\rm div}(\Omega\nabla(u_0^w))\right]^\perp
\end{equation}
\begin{equation}
\xi=\Delta^{-1}\left[ \partial_if_i+f_0 -{\rm div}(\Omega\nabla (\vec v\,^\circ\cdot x)) - {\rm div}(\Omega\nabla (u^\circ_0)\right]^\perp.
\end{equation}
\end{subequations}
We want to show that \eqref{w+T(w)=eta} admits a solution $w\in Y$, where $Y$ denotes those functions in $y\in H^{1,2}(\RR^n\backslash\{0\})$ for which the following norm is finite:
\begin{equation}
\|y\|_Y=\sup_{0<r<1}\frac{M_{1,2}(y,r)}{\om(r)\,r\,E(r)}+\sup_{r>1}\frac{M_{1,2}(y,r)}{\delta\,r^{-n}}.
\end{equation}
Here $\delta>0$ will be small, but  we may still assume $\e(t)$ satisfies \eqref{small_epsilon}. For $\om(r)$ this means
$\om(r)<\delta $ and $\int_0^1 \om^2(r)\,r^{-1}\,dr<\delta$ \ for $ 0<r<1$.

We need to show $\xi\in Y$ and that each of the mappings $T_j:Y\to Y$ is small.
For future reference, we note that $y\in Y$ implies
\begin{equation}\label{est:Mp(grad(y))}
M_2(\nabla y,r)\leq \begin{cases}
\om(r)\,E(r)\,\|y\|_Y & 0<r<1 \\
 \delta\,r^{-n-1}\,\|y\|_Y & r>1.
\end{cases}
\end{equation}

First let us confirm that $T_1:Y\to Y$ with small norm. Let $y\in Y$ with $\|y\|_Y\leq 1$. Use Proposition \ref{pr:3} with $\vec f=\Omega\nabla y$ to conclude
\[
M_{1,2}(T_1(y),r)\leq c\left(r^{-n}\int_0^r M_2(\Omega\nabla y,\rho)\,\rho^n\,d\rho+r^2\int_r^\infty M_2(\Omega\nabla y,\rho)\,\rho^{-2}\,d\rho\right).
\]
For any function $y\in H^{1,2}_{\loc}(\RR^n\backslash\{0\})$, note that $M_2(\Omega\nabla y,r)=0$ for $r>1$ and $M_2(\Omega\nabla y,r)\leq \om(r)\,M_2(\nabla y,r)$ for $0<r<1$; but for $y\in Y$ with $\|y\|_Y\leq 1$ this last estimate becomes $M_2(\Omega\nabla y,r) \leq \om^2(r)\,E(r)$. Consequently, for $0<r<1$ we have
\begin{equation}\label{est:M_(1,p)(T1)}
M_{1,2}(T_1(y),r)
\leq c\left(r^{-n}\int_0^r \om^2(\rho)\,E(\rho)\rho^n\,d\rho+r^2\int_r^1 \om^2(\rho)\,E(\rho)\rho^{-2}\,d\rho\right).
\end{equation}
Similar to \eqref{int-exp*E}, we can integrate by parts to show
\begin{equation}\label{int-exp*E(r)}
r^{-n}\int_0^r E(\rho)\rho^{n}\,d\rho\leq c\,r\,E(r).
\end{equation}
We can use this, the monotonicity of $\om$, and \eqref{small_epsilon} to estimate the first integral in \eqref{est:M_(1,p)(T1)}:
\[
r^{-n}\int_0^r \om^2(\rho)\,E(\rho)\rho^n\,d\rho \leq c\,\delta\, \om(r)\,r\,E(r).
\]
We can also use \eqref{om-notrapiddecrease} and \eqref{E*r^lambda-decreasing} to estimate  the second integral in \eqref{est:M_(1,p)(T1)}:
\[
r^2\int_r^1 \om^2(\rho)\,E(\rho)\rho^{-2}\,d\rho\leq \delta\,r^2\om(r)r^{-1+\kappa}E(r)\,r^{-\lambda}\int_r^1\rho^{-1-\kappa-\lambda}\,d\rho,
\]
where we have chosen $0<\lambda<\kappa$. We can evaluate this last integral to obtain
\[
r^2\int_r^1 \om^2(\rho)\,E(\rho)\rho^{-2}\,d\rho\leq \frac{\delta}{\kappa-\lambda}\,\om(r)\,r\,E(r)(1+r^{\kappa-\lambda})
\leq c\,\delta\,\om(r)\,r\,E(r).
\]
Meanwhile for $r>1$, we have 
\[
M_{1,2}(T_1(y),r)\leq c\,r^{-n}\int_0^1 M_2(\Omega\nabla y,\rho)\rho^n\,d\rho\leq c\,\delta^2 \,r^{-n}\int_0^1 E(\rho)\rho^n\,d\rho
\leq c\,\delta^2 \,r^{-n}.
\]
These estimates together show $\|T_1(y)\|_{Y}\leq c\,\delta$. Taking $\delta$ small enough, we have $\|T_1\|_{Y\to Y}<1/3$.

For $T_2$ we need to use Proposition \ref{pr:2} to obtain estimates on $\vec v\,^y$ for $y\in Y$, so let us first confirm that 
$F(t,\nabla y)$ in \eqref{DynSys+w} satisfies the conditions \eqref{DS:F1} and \eqref{DS:F2}. For  \eqref{DS:F1}  we use  \eqref{F(w)-condition},  to compute
\[
\begin{aligned} 
\int_0^\infty {\mathcal E}^{-1}(t)\,|F_1(t,\nabla y)|dt &\leq \int_0^\infty  {\mathcal E}^{-1}(t)\,\e(t)\,\meanint |\nabla y|\,d\theta\,dt \\
&=c\int_0^1 E^{-1}(r)\,\om(r)\,\meanint |\nabla y|\,d\theta\,\frac{dr}{r} \\
&=c\sum_{j=0}^\infty \int_{2^{-j-1}}^{2^{-j}} E^{-1}(r)\,\om(r)\,\meanint|\nabla y|\,d\theta\,\frac{dr}{r} 
\end{aligned} 
\]
But  \eqref{om-notrapiddecrease} and \eqref{E*r^lambda-decreasing} imply that for $2^{-j-1}<r<2^{-j}$ we have $E^{-1}(r)<2^\lambda\,E^{-1}(2^{-j-1})$ and $\om(r)<2^{1-\kappa}\om(2^{-j-1})$, so
\[
\begin{aligned} 
\int_0^\infty {\mathcal E}^{-1}(t)\,|F_1(t,\nabla y)|dt 
&\leq c\,\sum_{j=0}^\infty E^{-1}(2^{-j-1})\,\om(2^{-j-1})\,M_1(\nabla y,2^{-j-1}) \\
&\leq c\,\sum_{j=0}^\infty E^{-1}(2^{-j-1})\,\om(2^{-j-1})\,M_2(\nabla y,2^{-j-1}) \\
&\leq c\,\sum_{j=0}^\infty \om^2(2^{-j-1})\,\|y\|_Y \leq c\,\left(\int_0^1\frac{\om^2(r)}{r}dr\right)\,\|y\|_Y.
\end{aligned} 
\] 
Hence we have confirmed \eqref{DS:F1} with
\begin{equation}\label{est:L1-norm(E^(-1)F1}
\|{\mathcal E}^{-1}(t)\,F_1(t,\nabla y)\|_{L^1(0,\infty)}\leq c\,\delta\,\|y\|_Y.
\end{equation}
To confirm \eqref{DS:F2} and estimate $c_\alpha$ for $\alpha\in [n-\delta,n)$, we use the same tools to compute
\[ 
\begin{aligned}
e^{\alpha t}\int_t^\infty |F(t,\nabla y)|e^{-\alpha s}\,ds&=r^{-\alpha}\int_0^r\om(\rho)\meanint |\nabla y|d\theta\,\rho^\alpha\,\frac{d\rho}{\rho}\\
&\leq c\,r^{-\alpha}\,\om(r)\sum_{j=0}^\infty \int_{2^{-j-1}r}^{2^{-j}r}\meanint |\nabla y|\,d\theta\, \rho^\alpha\, \frac{d\rho}{\rho} \\
&\leq c\,r^{-\alpha}\,\om(r)\sum_{j=0}^\infty M_1(\nabla y,2^{-j-1}r)\,(2^{-j}r)^\alpha \\
&\leq c\,r^{-\alpha}\,\om(r)\sum_{j=0}^\infty M_2(\nabla y,2^{-j-1}r)\,(2^{-j}r)^\alpha \\
&\leq c\,r^{-\alpha}\,\om(r)\,\|y\|_Y\,\sum_{j=0}^\infty \om(2^{-j-1}r)\,E(2^{-j-1}r)\,(2^{-j-1}r)^\alpha  \\
&\leq c\,r^{-\alpha}\,\om(r)\,\|y\|_Y\int_0^r \om(\rho)\,E(\rho)\,\rho^\alpha\,\frac{d\rho}{\rho}\\
&\leq c\,r^{-\alpha}\,\om^2(r)\,E(r)\,r^{\alpha/2}\,\|y\|_Y\int_0^r \rho^{\alpha/2-1}\,{d\rho}\\
&= c\,\om^2(r)\,{E}(r)\,\|y\|_Y=c\,\e^2(t)\,{\mathcal E}(t)\,\|y\|_Y.
\end{aligned}
\]
We conclude that  \eqref{DS:F2} holds with
\begin{equation}\label{est:c-alpha}
c_\alpha=c\,\delta\,\|y\|_Y.
\end{equation}

Now let us confirm  $T_2:Y\to Y$ with small norm. We want to use Proposition \ref{pr:3} with $\vec f=\Omega\nabla(\vec v\,^y\cdot x)$, so we need to estimate $M_2(\nabla(\vec v\,^y\cdot x))$ for $|x|<1$. Using \eqref{phi=nv} and then Proposition \ref{pr:2} with \eqref{est:L1-norm(E^(-1)F1} and \eqref{est:c-alpha} we have
\[
\begin{aligned}
|\nabla(\vec v\,^y\cdot x))|&\leq c(r|(\vec v\,^y)'|+|\vec v\,^y|)\leq c\,(1+\e(t))\,(|\phi^y(t)|+|\psi^y(t)|) +c\,\e(t)\meanint|\nabla y|\,d\theta\\
&  \leq c\,\delta\,{\mathcal E}(t)\,\|y\|_Y= c\,\delta\, E(r)\,\|y\|_Y.
\end{aligned}
\]
Assuming $\|y\|_Y\leq 1$ and applying Proposition \ref{pr:3}, we have for $0<r<1$
\[
\begin{aligned}
M_{1,2}(T_2[y],r)&\leq c\,\delta\left(r^{-n}\int_0^r \om(\rho)E(\rho)\rho^n\,d\rho
+r^2\int_r^1 \om(\rho)E(\rho)\,\rho^{-2}\,d\rho\right) \\
&\leq c\,\delta\,\om(r)\,r\,E(r)
\end{aligned}
\]
where we have used \eqref{int-exp*E(r)}, \eqref{om-notrapiddecrease}, and \eqref{E*r^lambda-decreasing} as we did for $T_1$.
Meanwhile, for $r>1$ we have
\[
M_{1,2}(T_2[y],r)\leq c\,\delta^2\,r^{-n}\int_0^1E(\rho)\,\rho^n\,d\rho \leq c\,\delta^2\,r^{-n}.
\]
So $\|T_2[y]\|_Y<c\,\delta$ and by taking $\delta$ small enough we can arrange $\|T_2\|_{Y\to Y}<1/3$.

The last map to consider is $T_3$, so we need to estimate $M_2(\nabla u_0^y,r)$ for $0<r<1$. Recall that $u_0^y$ comes from 
\eqref{eta(r)-ODE-a} without $\vec f$ and $f_0$, so
\[
|\nabla u_0^y(r)|\leq c\left( \om(r)\,r\,|(\vec v\,^y)'(r)|+\om(r)|\vec v\,^y(r)|+p[\nabla w](r)\right)
\leq c\,\delta\,\om(r)\,E(r)\,\|y\|_Y.
\]
The rest of the proof that $\|T_3\|_{Y\to Y}<1/3$ is the same as for $T_2$.
 
Finally we need to confirm that $\xi\in Y$. We take the terms one at a time. For $\xi_1=\Delta^{-1}[{\rm div}\vec f]^\perp$ where $\vec f\in L^2(\RR^n)$ has support in $1/4<|x|<1/2$, we apply Proposition 3b to conclude
\[
M_{1,2}(\xi_1,r)\leq \begin{cases} c\,\|\vec f\|_{L^2}\,r^2 & 0<r<1 \\ c\,\|\vec f\|_{L^2}\,r^{-n} & r>1.\end{cases}
\]
Similarly, for $\xi_2=\Delta^{-1}[ f_0]^\perp$ where $ f_0\in L^2(\RR^n)$ has support in $1/4<|x|<1/2$,  we apply Proposition 3a to conclude
\[
M_{1,2}(\xi_2,r)\leq M_{2,2}(\xi_2,r)\leq \begin{cases} c\,\| f_0\|_{L^2}\,r^2 & 0<r<1 \\ c\,\| f_0\|_{L^2}\,r^{-n} & r>1.\end{cases}
\]
For $\xi_3=\Delta^{-1}[ {\rm div}(\Omega\nabla(\vec v\,^\circ \cdot x)]^\perp$, we need to estimate $M_2(\nabla(\vec v\,^\circ \cdot x),r)$ for $0<r<1$.
But by Proposition 2 applied to \eqref{DynSys+w} with $F=0$ and $G$ satisfying \eqref{G(t)-condition}, we know that
\[
|\nabla(\vec v\,^\circ\cdot x)|\leq c\,(|\phi(t)|+|\psi(t)|) \leq c\,(\|\vec f\|_{L^2}+\|f_0\|_{L^2}).
\]
This enables us to estimate (using tricks as above)
\[
\begin{aligned}
M_{1,2}(\xi_3,r)&\leq c\,(\|\vec f\|_{L^2}+\|f_0\|_{L^2})\left(r^{-n}\int_0^r \om(\rho)\,E(\rho)\,\rho^n\,d\rho+r^2\int_r^1\om(\rho)\,E(\rho)\,\rho^{-2}\,d\rho\right) \\
&\leq c\,\begin{cases} (\|\vec f\|_{L^2}+\|f_0\|_{L^2})\,\om(r)\,r\,E(r) & 0<r<1, \\
 (\|\vec f\|_{L^2}+\|f_0\|_{L^2})\, r^{-n} & r>1. \end{cases}
\end{aligned}
\]
Finally, for $\xi_4=\Delta^{-1} [{\rm div}(\Omega\nabla (u^\circ_0)]^\perp$ we need to estimate  $M_2(\nabla (u^\circ_0),r)$ for $0<r<1$.
But from \eqref{eta(r)-ODE-a} we know
\[
(u_0^\circ)'(r)=\alpha^{-1}(r)\left[\widetilde f(r)+r^{1-n}\int_0^r \overline{f_0}(\rho)\,d\rho\right],
\]
which has support in $1/4<r<1/2$, so $M_{1,2}(\xi_4,r)$ satisfies the same estimates as for $M_{1,2}(\xi_1,r)$ and $M_{1,2}(\xi_1,r)$.
We summarize this by
\begin{equation}\label{est:|xi|_Y}
\|\xi\|_Y\leq c\,(\|\vec f\|_{L^2}+\| f_0\|_{L^2})\leq c\,\|u\|_{L^2(B_1)}.
\end{equation}

We have shown that there exists $w\in Y$ with $\|w\|_Y\leq c\,\|u\|_{L^2(B_1)}$ such that the function
 $\widetilde u(x)$ as in \eqref{tilde-u-asym} is a weak solution of \eqref{Lu=f}. Moreover, $\widetilde u$ satisfies the desired estimate \eqref{grad(u)-est} 
 since $M_{2}(u'_0,r)$, $M_{2}(r\,\vec v,'r)$, $M_2(\nabla w,r)\leq c\,\om(r)\,E(r)\,\|u\|_{L^2(B_1)}$ and $M_2(\vec v,r)\leq  c\,E(r)\,\|u\|_{L^2(B_1)}$.
 It only remains to show that $\widetilde u=\chi u$. But  $z=\widetilde u-\chi u$ is a solution of $\partial_j(a_{ij}\partial_i u)=0$ in $\RR^n$ that vanishes at infinity, so $z=0$ by the maximum principle.  
 $\Box$

\medskip\noindent
  \section{The Gilbarg-Serrin Example}\label{sec:GS}

\medskip
   In \cite{GS}, Gilbarg and Serrin considered coefficients of the form
 \begin{equation}\label{GS}
 a_{ij}(r,\theta)=\delta_{ij}+g(r)\,\theta_i\theta_j, \quad\hbox{where $r=|x|$ and $\theta_i=x_i/r$.}
 \end{equation}
They assumed $g(r)$ is a bounded function  satisfying $g(r)>-1+\varepsilon$, which guarantees uniform ellipticity.
In \cite{MM} it was shown that the system \eqref{DynSys-R1} reduces to the scalar equation 
\begin{equation}\label{eq:ODE}
\frac{d\phi}{dt}=\frac{n-1}{n}\, {\rm g}(t)\,\phi.
 \end{equation}
There is no matrix, so the unique ``eigenvalue'' of $-R(r)$ is $\frac{n-1}{n}\,  g(r)$ and
 \[
E(r)=\exp\left[\frac{n-1}{n}\int_r^1\frac{g(\rho)}{\rho}\,d\rho\right].
 \]
In \cite{MM2} it was shown that, provided $g(r)$ has finite variation on $(0,1)$, then there is a solution of \eqref{Lu=0} of the form
 \eqref{u-asym} with $u_0=0$,
 \begin{equation}
\vec v(r)=c\,\vec e_j \exp\left[\frac{n-1}{n}\int_r^1\frac{g(\rho)}{\rho}\,d\rho\right]\left(1+o(1))\right) \quad\hbox{as}\ r\to 0,
\end{equation}
and $w$ satisfying $M_2(\nabla w,r)\leq c\,\om(r)\,{E}(r)$.
Here $\vec e_j$ is any one of the $n$ coordinate unit vectors, so we have $|\vec v|=c\,E(r)(1+o(1))$
and we see that the estimate \eqref{grad(u)-est} in Theorem 1 is sharp in this case.

  \bigskip\noindent
{\bf Acknowledgement}: This paper has been supported by the RUDN University Strategic
Academic Leadership Program.


\end{document}